\newcounter{alphthm}
\theoremstyle{plain}
\newtheorem{theorem}{Theorem}[section]
\newtheorem{lemma}[theorem]{Lemma}
\newtheorem{proposition}[theorem]{Proposition}
\newtheorem{cor}[theorem]{Corollary}
\theoremstyle{definition}
\newtheorem{rem}[theorem]{Remark}
\newtheorem{example}[theorem]{Example}
\newcommand{\be}{\begin{equation}}
\newcommand{\ee}{\end{equation}}
\newcommand{\ben}{\begin{enumerate}}
\newcommand{\een}{\end{enumerate}}
\begin{document}
\title{Some Results Related to Soft Topological Spaces}
\author{E. Peyghan , B. Samadi and A. Tayebi}
\maketitle

\maketitle
\begin{abstract}
The notion of soft sets is introduced as a general mathematical tool for dealing with
uncertainty. In this paper,  we consider the concepts of soft compactness, countably soft compactness and obtain some results. We study some soft separation axioms that have been studied by  Min and Shabir-Naz. By constructing a special soft topological space, show that some classical results in general topology are not true about soft topological spaces, for instance every compact Housdorff spaces need not be normal.\\\\
{\bf {Keywords}}:   Soft closed, Soft compact space, Soft open, Soft topological spaces.\footnote{ 2010 Mathematics subject Classification: 06D72, 54A40.}
\end{abstract}

\section{Introduction}

During recent years General Topology was developed by many mathematicians. The theory of generalized topological spaces, which was founded by \'{A}. Cs\'{a}sz\'{a}r is one of these developments \cite{CS}. Recently, in \cite{SN} Shabir-Naz introduced and studied the concepts of soft topological spaces and some related concepts. The generalized topology is different from topology by its axioms ( A collection of subsets of X is a generalized topology on X if and only if it contains empty set and arbitrary union of its elements). But the soft topology is based on soft sets theory and not sets.

Some notions in Mathematics can be considered as mathematical tools for dealing with uncertainties, namely theory of fuzzy sets, theory of intuitionistic fuzzy sets,   theory of vague sets, theory of rough sets and etc. But all of these theories have their own difficulties. In \cite{M}, Molodtsov introduced the concept of a soft set in order to solve complicated problems in the economics, engineering, and environmental areas because no mathematical tools can successfully deal with the various kinds of uncertainties
in these problems.  He successfully applied the soft theory in several directions, such as  game theory, probability, Perron integration, Riemann integration and  theory of measurement \cite{M, MLK}.

In \cite{MBR}, Maji-Biswas-Roy defined and studied operations of soft sets. Then  Pei-Miao \cite{PM}  and Chen \cite{C} improved the work of Maji-Biswas-Roy \cite{MBR0, MBR}. The properties and applications of soft set theory have been studied increasingly in \cite{Ali}. In \cite{CE}, \c{C}a\u{g}man-Enginoglu redefined the operations of the soft sets  and constructed a uniint decision making method  by using these new operations, and developed soft set theory. Then to make easy compaction with the operations of soft sets, they presented the soft matrix theory and set up the soft max–min decision making method \cite{CE2}. These decision making methods  can be successfully applied to many problems that contain uncertainties. In \cite{SN}, the authors studied some concepts related to soft spaces such as soft interior,  soft subspace and soft separation axioms. Recently,   Aygunoglu-Aygun introduced the soft product topology and defined the version of compactness in soft spaces named soft compactness \cite{AA}.

In this paper,  we consider the concepts of soft compactness and countably soft compact and get some results. Then, we study some soft separation axioms that have studied by  Min and Shabir-Naz. By constructing some examples we show that some classical results in general topology are not true about soft topological spaces, for instance every compact Housdorff spaces need not be normal.

\section{Preliminaries}
In this section, we recall some definitions and concepts discussed
in \cite{HA, WKM, SN, ZAMA}. Let $U$ be an initial universe and
$E$ be a set of parameters. Let $\mathbb{P}(U)$ denotes the power
set of $U$ and $A$ be a nonempty subset of $E$. A pair $(F, A)$ is
called a {\it soft set} over $U$, where $F$ is a mapping given by
$F: A\rightarrow\mathbb{P}(U)$. For two soft sets $(F, A)$ and
$(G, B)$ over common universe $U$, we say that $(F, A)$ is a {\it
soft subset} $(G, B)$ if $A\subseteq B$ and $F(e)\subseteq G(e)$,
for all $e\in A$. In this case, we write $(F,
A)\widetilde{\subseteq}(G, B)$ and $(G, B)$ is said to be a {\it
soft super set} of $(F, A)$. Two soft sets $(F, A)$ and $(G, B)$
over a common universe $U$ are said to be {\it soft equal} if $(F,
A)\widetilde{\subseteq}(G, B)$ and $(G, B)\widetilde{\subseteq}(F,
A)$. A soft set $(F, A)$ over $U$ is called a {\it null soft set},
denoted by $\Phi_A$, if for each $e\in A$, $F(e)=\emptyset$.
Similarly, it is called {\it absolute soft set}, denoted by
$\widetilde{U}$, if for each $e\in A$, $F(e)=U$.

\bigskip

The {\it union} of two soft sets $(F, A)$ and $(G, B)$ over the common universe
$U$ is the soft set $(H, C)$, where $C=A\cup B$ and for each $e\in
C$,
\[
H(e)=\left\{
\begin{array}{ccc}
F(e)&e\in A-B\\
G(e)&e\in B-A\\
F(e)\cup G(e)&e\in A\cap B
\end{array}
\right.
\]
We write $(F, A)\cup(G, B)=(H, C)$. Moreover, the
{\it intersection} $(H, C)$ of two soft sets $(F, A)$ and $(G,
B)$ over a common universe $U$, denoted by $(F, A)\cap(G, B)$, is
defined as $C=A\cap B$ and $H(e)=F(e)\cap G(e)$ for each $e\in C$. The {\it difference} $(H, E)$ of two soft sets $(F, E)$ and $(G, E)$ over $X$, denoted by $(F, E)\backslash(G, E)$, is defined
as $H(e)=F(e)\backslash G(e)$, for each $e\in E$. Let $Y$ be a nonempty subset of $X$. Then $\widetilde{Y}$ denotes
the soft set $(Y, E)$ over $X$ where $Y(e)=Y$, for each $e\in
E$. In particular, $(X, E)$ will be denoted by $\widetilde{X}$. Let $(F, E)$ be a soft set over $X$ and $x\in X$. We say that
$x\in(F, E)$, whenever $x\in F(e)$, for each $e\in E$ \cite{PST}.

\bigskip

The relative complement of a soft set $(F, A)$ is denoted by $(F,
A)'$ and is defined by $(F, A)'=(F', A)$ where $F: A\rightarrow
\mathbb{P}(U)$ is defined by following
\[
F'(e)=U-F(e), \ \ \ \forall e\in A.
\]

\bigskip

Let $\tau$ be the collection of soft sets over $X$. Then $\tau$ is called a soft topology on $X$ if $\tau$ satisfies the following axioms:

(i)\ $\Phi_E$, $\widetilde{X}$ belong to $\tau$.

(ii)\ The union of any number of soft sets in $\tau$ belongs to $\tau$.

(iii)\ The intersection of any two soft sets in $\tau$ belongs to $\tau$.\\
The triple $(X, \tau, E)$ is called a soft topological space over
$X$. The members of $\tau$ are said to be soft open in $X$, and the
soft set $(F, E)$ is called soft closed in $X$ if its relative
component $(F, E)'$ belongs to $\tau$.

The proof of the following proposition is an easy application of
De Morgan's lows  with the definition of a soft topology on $X$ (see Proposition 3.3 of \cite{ZAMA}).
\begin{proposition}\label{prop21}
\emph{Let $(X, \tau, E)$ be a soft space over $X$. Then}
\begin{description}
  \item[1)] \ \emph{$\Phi_E$, $\widetilde{X}$ are closed soft set over $X$;}
  \item[2)]  \emph{The intersection of any number of soft closed sets is a soft closed set over $X$;}
  \item[3)]  \emph{The union of any two soft closed sets is a soft closed set over $X$.}
\end{description}
\end{proposition}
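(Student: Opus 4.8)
The plan is to reduce all three parts to the soft-topology axioms (i)--(iii) by passing to relative complements, since by definition a soft set $(F,E)$ is soft closed exactly when its relative complement $(F,E)'$ lies in $\tau$. The one preliminary ingredient I would establish first is the behaviour of the relative complement under the null soft set, the absolute soft set, soft union and soft intersection, that is, the soft-set form of De Morgan's laws. Because every soft set appearing in the statement carries the same parameter set $E$, the soft union and intersection are computed parameterwise, so all the identities below follow by checking them on each value $F(e)\subseteq U$ via the ordinary set-theoretic De Morgan laws.

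For part 1) I would compute the complements directly: since $\Phi_E$ sends each $e\in E$ to $\emptyset$, its relative complement sends $e$ to $U-\emptyset=U$, giving $(\Phi_E)'=\widetilde{X}$, and dually $(\widetilde{X})'=\Phi_E$. By axiom (i) both $\Phi_E$ and $\widetilde{X}$ belong to $\tau$, hence are soft open, so each of $\Phi_E$ and $\widetilde{X}$ is soft closed.

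For part 2) let $\{(F_i,E)\}_{i\in I}$ be an arbitrary family of soft closed sets, so every $(F_i,E)'$ is soft open. The key step is the De Morgan identity $\big(\bigcap_{i\in I}(F_i,E)\big)'=\bigcup_{i\in I}(F_i,E)'$, which I would verify parameterwise from $U-\bigcap_i F_i(e)=\bigcup_i\big(U-F_i(e)\big)$. The right-hand side is a union of soft open sets, hence soft open by axiom (ii), so the intersection on the left is soft closed. Part 3) is the dual statement: for two soft closed sets $(F_1,E)$ and $(F_2,E)$ I would use $\big((F_1,E)\cup(F_2,E)\big)'=(F_1,E)'\cap(F_2,E)'$, checked parameterwise from $U-(F_1(e)\cup F_2(e))=(U-F_1(e))\cap(U-F_2(e))$; the right-hand side is an intersection of two soft open sets, hence soft open by axiom (iii), so the union is soft closed.

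There is no genuinely hard step here; the argument is a direct transcription of the classical one. The only place that needs care, and the main obstacle in the bookkeeping sense, is confirming that the soft De Morgan laws hold as stated, including the edge cases $(\Phi_E)'=\widetilde{X}$ and $(\widetilde{X})'=\Phi_E$, and making sure the common parameter set $E$ is used throughout so that the soft union and intersection really are computed parameterwise. Once those identities are in hand, parts 1)--3) fall out immediately from axioms (i), (ii) and (iii) respectively.
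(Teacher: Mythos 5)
Your proposal is correct and follows exactly the route the paper indicates: the paper gives no written proof, stating only that the result is ``an easy application of De Morgan's laws with the definition of a soft topology,'' which is precisely your reduction of parts 1)--3) to axioms (i)--(iii) via the identities $(\Phi_E)'=\widetilde{X}$, $(\widetilde{X})'=\Phi_E$, and the parameterwise soft De Morgan laws. Nothing further is needed.
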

\section{Soft Compactness}
In this section, we are going to introduce the concept of soft
compactness about soft topological spaces and study some properties related to these spaces (also, see \cite{ZAMA}).

A family $\mathcal{A}=\{(F_\alpha, E)\}_{\alpha\in J}$  of soft sets is a cover of a soft set $ (F,E) $ if
\[
(F, E)\widetilde{\subseteq}\bigcup_{\alpha\in J}(F_\alpha, E).
\]
It is a soft open cover if each member of $\mathcal{A}$ is a soft open set. A subcover of $\mathcal{A}$ is a subfamily of $\mathcal{A}$ which is also a cover. A soft topological space $(X, \tau, E)$ is said to be soft compact if each soft open cover of $(X, E)$ has a finite subcover.

Let $(X, \tau_1, E)$ and $(X, \tau_2, E)$ be soft topological spaces. If $\tau_1\subseteq\tau_2$, then $\tau_2$ is soft finer than $\tau_1$. If $\tau_1\subseteq\tau_2$ or $\tau_2\subseteq\tau_1$, then $\tau_1$ is soft comparable with $\tau_2$. Then, we have the following.
\begin{proposition}
Let $(X, \tau_2, E)$ be a soft compact space and $\tau_1\subseteq\tau_2$. Then $(X, \tau_1, E)$ is soft compact.
\end{proposition}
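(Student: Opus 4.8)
The plan is to exploit the fact that the covering condition for a soft set is purely a statement about soft sets and does not refer to either topology. Concretely, a family $\mathcal{A}=\{(F_\alpha, E)\}_{\alpha\in J}$ covers $(X,E)=\widetilde{X}$ precisely when $\widetilde{X}\,\widetilde{\subseteq}\,\bigcup_{\alpha\in J}(F_\alpha, E)$, and this inclusion is insensitive to whether we regard the members as living in $\tau_1$ or in $\tau_2$. Thus the only role the topology plays is in deciding which families are admissible (i.e. \emph{soft open}) covers, and here the inclusion $\tau_1\subseteq\tau_2$ is exactly what we need.

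First I would take an arbitrary soft open cover $\mathcal{A}=\{(F_\alpha, E)\}_{\alpha\in J}$ of $\widetilde{X}$ with respect to $\tau_1$. By definition each member satisfies $(F_\alpha, E)\in\tau_1$, and since $\tau_1\subseteq\tau_2$ we immediately get $(F_\alpha, E)\in\tau_2$ for every $\alpha\in J$. Hence $\mathcal{A}$ is also a soft open cover of $\widetilde{X}$ with respect to the finer topology $\tau_2$.

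Next I would invoke the soft compactness of $(X,\tau_2,E)$: the cover $\mathcal{A}$, viewed in $\tau_2$, admits a finite subcover, say $\{(F_{\alpha_1},E),\ldots,(F_{\alpha_n},E)\}$ with $\widetilde{X}\,\widetilde{\subseteq}\,\bigcup_{k=1}^{n}(F_{\alpha_k}, E)$. Because this finite subfamily consists of members drawn from the original family $\mathcal{A}$, each $(F_{\alpha_k},E)$ already belongs to $\tau_1$, and the covering inclusion it satisfies is the same soft-set inclusion regardless of topology. Therefore it is a finite subcover of $\mathcal{A}$ with respect to $\tau_1$ as well.

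Since the arbitrary soft open cover $\mathcal{A}$ in $\tau_1$ was shown to possess a finite subcover, $(X,\tau_1,E)$ is soft compact, completing the argument. I do not expect any genuine obstacle here; the proof is essentially a bookkeeping exercise. The one point deserving care is to confirm explicitly that the defining inequality $\widetilde{X}\,\widetilde{\subseteq}\,\bigcup_{\alpha\in J}(F_\alpha, E)$ for a cover is a condition on soft sets alone, so that passing a finite subcover back and forth between $\tau_2$ and $\tau_1$ preserves the covering property.
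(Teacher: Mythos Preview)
Your proposal is correct and follows essentially the same approach as the paper: take a $\tau_1$-open cover, observe via $\tau_1\subseteq\tau_2$ that it is a $\tau_2$-open cover, extract a finite subcover by soft compactness of $(X,\tau_2,E)$, and note that this finite family already serves as the desired subcover for $\tau_1$. Your version is more explicit about why the covering condition is topology-independent, but the logical structure is identical to the paper's proof.
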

\begin{proof}
Let $\{(F_\alpha, E)\}_{\alpha\in J}$ be a soft open cover of $\widetilde{X}$ by soft open sets of $(X, \tau_1, E)$. Since $\tau_1\subseteq\tau_2$, then $\{(F_\alpha, E)\}_{\alpha\in J}$ is a soft open cover of $\widetilde{X}$ by soft open sets of $(X, \tau_2, E)$. But $(X, \tau_2, E)$ is soft compact. Therefore
\[
(X, E)\widetilde{\subseteq}(F_{\alpha_1}, E)\cup\ldots\cup(F_{\alpha_n}, E),
\] for some $\alpha_1,\ldots,\alpha_n\in J$. Hence $(X, \tau_1, E)$ is soft compact.
\end{proof}
In this paper, for convenience, let $SS(X)_E$ be the family of soft sets over $X$ with set of parameters $E$. We will apply two next propositions so much in the proofs.
\begin{proposition}\label{3.2}
Let $(F, E)$, $(G, E)$, $(H, E)$ and $(I, E)$ be soft sets in $SS(X)_E$. Then the following hold.
\begin{description}
  \item[(i)] $(F, E)\widetilde{\subseteq}(G, E)$ if and only if $(F, E)\cap(G, E)=(F, E)$;
  \item[(ii)] $(F, E)\widetilde{\subseteq}(G, E), (H, E)$ if and only if $(F, E)\widetilde{\subseteq}(G, E)\cap(H, E)$;
      \item[(iii)] If $(F, E)\widetilde{\subseteq}(H, E)$ and $(G, E)\widetilde{\subseteq}(I, E)$, then $(F, E)\cup(G, E)\widetilde{\subseteq}(H, E)\cup(I, E)$;
      \item[(iv)] $(F, E)\cap(F, E)'=\Phi_E$;
      \item[(v)] $(F, E)\cap(G, E)=\Phi_E$ if and only if $(F, E)\widetilde{\subseteq}(G, E)'$;
      \item[(vi)]  $(F,E)\widetilde{\subseteq}(G,E)$ if and only if $(G,E)'\widetilde{\subseteq}(F,E)'$.
\end{description}
\end{proposition}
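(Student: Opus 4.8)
The plan is to reduce every assertion to an elementary statement about ordinary subsets of $X$, applied separately for each parameter $e\in E$. The crucial observation is that all the soft sets in question share the common parameter set $E$, so each of the soft operations involved is defined coordinatewise: $(F,E)\cap(G,E)$ sends $e$ to $F(e)\cap G(e)$, $(F,E)\cup(G,E)$ sends $e$ to $F(e)\cup G(e)$, and $(F,E)'$ sends $e$ to $X-F(e)$. Likewise, because the parameter sets coincide, the soft inclusion $(F,E)\widetilde{\subseteq}(G,E)$ collapses to the single condition $F(e)\subseteq G(e)$ for every $e\in E$, while soft equality and the null soft set $\Phi_E$ amount, respectively, to $F(e)=G(e)$ and $F(e)=\emptyset$ holding for every $e$. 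Hence each claimed soft identity or equivalence is true if and only if the corresponding classical set-theoretic statement holds for every fixed $e\in E$; I would record this coordinatewise reduction once as a preliminary remark.

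With this reduction in hand, I would dispatch the six parts by quoting the familiar set identities, one $e$ at a time. For (i) I would use that $F(e)\subseteq G(e)\iff F(e)\cap G(e)=F(e)$; for (ii), that $F(e)\subseteq G(e)$ and $F(e)\subseteq H(e)$ together are equivalent to $F(e)\subseteq G(e)\cap H(e)$; for (iii), the monotonicity of union, whereby $F(e)\subseteq H(e)$ and $G(e)\subseteq I(e)$ force $F(e)\cup G(e)\subseteq H(e)\cup I(e)$; for (iv), that $F(e)\cap(X-F(e))=\emptyset$; for (v), that $F(e)\cap G(e)=\emptyset\iff F(e)\subseteq X-G(e)$; and for (vi), the order-reversing property of complementation, $F(e)\subseteq G(e)\iff X-G(e)\subseteq X-F(e)$. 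In each case I would verify that the left- and right-hand soft expressions have the same underlying $e$-indexed family, so that the pointwise equivalence assembles into the asserted soft-set statement.

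There is no genuine obstacle here: once the coordinatewise description of the operations is in place, every part is an instance of a standard fact about subsets of a set. The only points demanding a little care are bookkeeping ones. First, the soft inclusion relation in general carries a parameter-set containment condition, which I must note is vacuously satisfied here since all the sets are indexed by the same $E$; this is precisely what licenses the reduction to a purely pointwise comparison. Second, in (iv), (v) and (vi) one must be consistent about the universe against which the relative complement $X-F(e)$ is taken, and keep the direction of the inclusion correct when complementing in (v) and (vi). I would therefore let each of the six verifications follow in a single line after the reduction is stated.
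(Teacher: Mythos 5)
Your proposal is correct and uses exactly the method the paper uses: reduce each assertion to a pointwise statement about the subsets $F(e),G(e),\dots$ of $X$ for each fixed $e\in E$ (legitimate because all parameter sets coincide) and then invoke the corresponding classical set identity. The paper only writes this out for part (iii) and leaves the others as routine, whereas you cover all six parts, but the argument is the same.
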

\begin{proof}
Here, we only prove the (iii). Let $ (F,E)\cup (G,E)=(J,E)$ and $(H,E)\cup (I,E)=(K,E)$. Since $(F,E)\widetilde{\subseteq}(H,E)$ and $(G,E)\widetilde{\subseteq}(I,E) $; then
\[
F(e){\subseteq}H(e) \ \   and \ \ G(e){\subseteq}I(e), \ \ \forall e\in E.
\]
Therefore
\[
J(e)=F(e)\cup G(e)\subseteq H(e)\cup I(e)=K(e).
\]
Hence $(J,E)\widetilde{\subseteq}(K,E)$.
\end{proof}
Also we can obtain the following easily.
\begin{proposition}\label{3.3}
Let $(F,E)$ be a soft set and $\{(F_{\alpha},E)\}_{\alpha\in J} $ be a family of soft sets in $SS(X)_E$. Then the following hold.
\begin{description}
  \item[(i)] $(F, E)\cap(F, E)'=\Phi_E$;
  \item[(ii)] $(F, E)\cup\Phi_E=(F, E)$;
  \item[(iii)] $(F, E)\cap (\cup_{\alpha\in J}(F_{\alpha},E))=\cup_{\alpha\in J}((F,E)\cap(F_{\alpha},E))$;
  \item[(iv)] $\Phi_E'=\widetilde{X}$;
  \item[(v)] $\widetilde{X}'=\Phi_E$.
\end{description}
\end{proposition}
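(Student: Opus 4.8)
The plan is to exploit the fact that every soft set appearing in the statement is built on the \emph{same} parameter set $E$. For soft sets sharing the parameter set $E$, the definitions of union, intersection and relative complement all act parameter-by-parameter: writing $(F,E)\cup(G,E)=(H,E)$ we have $H(e)=F(e)\cup G(e)$, writing $(F,E)\cap(G,E)=(H,E)$ we have $H(e)=F(e)\cap G(e)$, and $(F,E)'=(F',E)$ with $F'(e)=X-F(e)$ (the relative complement being taken in the fixed universe $X$, matching the convention that defines $\widetilde{X}$). Consequently every identity below reduces, for each fixed $e\in E$, to a corresponding elementary identity among ordinary subsets of $X$, and it suffices to verify equality of the $e$-th component sets for all $e\in E$.

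Concretely, first I would treat items (i), (ii), (iv) and (v), which are immediate pointwise reductions. For (i), for each $e\in E$ we get $F(e)\cap(X-F(e))=\emptyset$, so the intersection has empty $e$-component for every $e$ and hence equals $\Phi_E$; in fact this is nothing but Proposition \ref{3.2}(iv) restated, so it may simply be cited. For (ii), the equality $F(e)\cup\emptyset=F(e)$ for each $e$ gives $(F,E)\cup\Phi_E=(F,E)$. For (iv), the $e$-component of $\Phi_E'$ is $X-\emptyset=X$, which is exactly the $e$-component of $\widetilde{X}$; for (v), the $e$-component of $\widetilde{X}'$ is $X-X=\emptyset$, i.e. $\Phi_E$. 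Each of these is finished once the relevant component equality is recorded for all $e\in E$.

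The only item with any content is the distributive law (iii). Here I would first note that, since all members $(F_{\alpha},E)$ carry the parameter set $E$, the soft union $\cup_{\alpha\in J}(F_{\alpha},E)$ again has $e$-component $\cup_{\alpha\in J}F_{\alpha}(e)$, so that the $e$-component of the left-hand side is $F(e)\cap\big(\cup_{\alpha\in J}F_{\alpha}(e)\big)$, while that of the right-hand side is $\cup_{\alpha\in J}\big(F(e)\cap F_{\alpha}(e)\big)$. These two are equal by the ordinary distributivity of intersection over an arbitrary union of subsets of $X$, and applying this for every $e\in E$ yields the soft equality. I do not expect a genuine obstacle: the entire proposition is bookkeeping, and the only point requiring mild care is making sure the common-parameter assumption is invoked so that unions and intersections are genuinely computed componentwise, and that the complement is consistently taken in the universe $X$.
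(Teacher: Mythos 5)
Your proof is correct, and it is exactly the argument the paper has in mind: the paper gives no proof at all for this proposition (it only remarks that it is obtained ``easily''), and the componentwise reduction you carry out is the same technique the authors display in their proof of Proposition \ref{3.2}(iii). Your observation that (i) merely restates Proposition \ref{3.2}(iv) is also accurate.
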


\bigskip

Let $(F,E)$ be a soft set over $X$ and $Y$ be a nonempty subset of $X$. Then the sub-soft set of $(F,E)$ over $Y$ denoted by $(^YF,E)$ is defined as follows
\[
^YF(e)=Y\cap F(e),
\]
for each $e\in E$. In other word $(^YF, E)=\widetilde{Y}\cap(F, E)$. Now, suppose that $(X, \tau, E)$  be a soft topological space over $X$ and $Y$ be a nonempty subset of $X$. Then
\[
\tau_Y=\{(^YF, E)|(F, E)\in\tau\},
\]
is said to be soft relative topology on $Y$ and $(Y, \tau_Y, E)$ is called a soft subspace of $(X, \tau, E)$. Here, we exhibit a criterion that applies $\widetilde{Y}$ is soft compact by soft open covers  of $\widetilde{Y}$, that all of members are soft open sets in $X$.

\begin{theorem}\label{babi}
Let $(Y,\tau_Y, E)$ be a soft subspace of a soft space $(X,\tau,E)$. Then $(Y,\tau_Y, E)$ is soft compact if and only if every cover of $\widetilde{Y}$ by soft open sets in $X$ contains a finite subcover.
\end{theorem}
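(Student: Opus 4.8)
The plan is to prove the equivalence by passing back and forth between soft open covers of $\widetilde{Y}$ whose members lie in $\tau$ and those whose members lie in the relative topology $\tau_Y$, exploiting the identity $(^YF,E)=\widetilde{Y}\cap(F,E)$ together with the distributive law of Proposition \ref{3.3}(iii) and the subset characterizations of Proposition \ref{3.2}. Recall that, since the underlying universe of the subspace is $Y$, soft compactness of $(Y,\tau_Y,E)$ means precisely that every cover of $\widetilde{Y}$ by members of $\tau_Y$ admits a finite subcover.

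For the ``only if'' direction, suppose $(Y,\tau_Y,E)$ is soft compact and let $\{(F_\alpha,E)\}_{\alpha\in J}$ be a cover of $\widetilde{Y}$ by soft open sets of $X$, so each $(F_\alpha,E)\in\tau$ and $\widetilde{Y}\widetilde{\subseteq}\bigcup_{\alpha\in J}(F_\alpha,E)$. Intersecting with $\widetilde{Y}$ and distributing, Proposition \ref{3.2}(i) and Proposition \ref{3.3}(iii) give
\[
\widetilde{Y}=\widetilde{Y}\cap\Big(\bigcup_{\alpha\in J}(F_\alpha,E)\Big)=\bigcup_{\alpha\in J}\big(\widetilde{Y}\cap(F_\alpha,E)\big)=\bigcup_{\alpha\in J}(^YF_\alpha,E),
\]
so $\{(^YF_\alpha,E)\}_{\alpha\in J}$ is a soft open cover of $\widetilde{Y}$ in $(Y,\tau_Y,E)$. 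Soft compactness produces $\alpha_1,\dots,\alpha_n\in J$ with $\widetilde{Y}=\bigcup_{i=1}^n(^YF_{\alpha_i},E)$, and since each $(^YF_{\alpha_i},E)\widetilde{\subseteq}(F_{\alpha_i},E)$, monotonicity of soft union (Proposition \ref{3.2}(iii)) yields $\widetilde{Y}\widetilde{\subseteq}\bigcup_{i=1}^n(F_{\alpha_i},E)$; thus $(F_{\alpha_1},E),\dots,(F_{\alpha_n},E)$ is the required finite subcover.

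Conversely, assume every cover of $\widetilde{Y}$ by soft open sets of $X$ has a finite subcover, and let $\{(G_\alpha,E)\}_{\alpha\in J}$ be a soft open cover of $\widetilde{Y}$ in the subspace, with each $(G_\alpha,E)\in\tau_Y$. By definition of $\tau_Y$ choose $(F_\alpha,E)\in\tau$ with $(G_\alpha,E)=\widetilde{Y}\cap(F_\alpha,E)$; then $(G_\alpha,E)\widetilde{\subseteq}(F_\alpha,E)$, so $\{(F_\alpha,E)\}_{\alpha\in J}$ covers $\widetilde{Y}$ by soft open sets of $X$. The hypothesis supplies $\alpha_1,\dots,\alpha_n$ with $\widetilde{Y}\widetilde{\subseteq}\bigcup_{i=1}^n(F_{\alpha_i},E)$, and intersecting with $\widetilde{Y}$ and distributing recovers
\[
\widetilde{Y}=\widetilde{Y}\cap\Big(\bigcup_{i=1}^n(F_{\alpha_i},E)\Big)=\bigcup_{i=1}^n\big(\widetilde{Y}\cap(F_{\alpha_i},E)\big)=\bigcup_{i=1}^n(G_{\alpha_i},E),
\]
so $\{(G_{\alpha_i},E)\}_{i=1}^n$ is a finite subcover in $\tau_Y$, proving $(Y,\tau_Y,E)$ soft compact.

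I do not expect a genuine obstacle, as this is the soft counterpart of the classical subspace criterion for compactness. The one point requiring care is the consistent bookkeeping with the soft-subset relation: every step that converts ``$\widetilde{Y}$ is covered'' into ``the traces $\widetilde{Y}\cap(F_\alpha,E)$ exhaust $\widetilde{Y}$'' (and back) must be justified by the distributive and subset identities of Propositions \ref{3.2} and \ref{3.3} rather than by informal set manipulation, since the soft operations are defined parameter-wise.
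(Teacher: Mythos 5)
Your proof is correct and follows essentially the same route as the paper's: intersect the ambient cover with $\widetilde{Y}$ to pass to the relative topology in one direction, and lift $\tau_Y$-open sets to $\tau$-open supersets in the other, invoking Propositions \ref{3.2} and \ref{3.3} for the bookkeeping. The only difference is that you spell out the distributive-law steps that the paper leaves implicit.
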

\begin{proof}
Let $(Y,\tau_Y, E)$ be soft compact and $\{(F_\alpha,E)\}_{\alpha\in J}$ be a cover of $\widetilde{Y}$ by soft open sets in $X$. By Propositions \ref{3.2} and  \ref{3.3},  we can see that
$\{^YF_\alpha,E\}_{\alpha\in J}$ is a soft open cover of $\widetilde{Y}$. Therefore
\[
(Y,E)\widetilde{\subseteq}(^YF_{\alpha_1},E)\cup\ldots\cup (^YF_{\alpha_n},E),
\]
for some $\alpha_1,\ldots,\alpha_n\in J$. This implies that $\{(F_{\alpha_i},E)\}_{i=1}^n$ is a subcover of $\widetilde{Y}$ by soft open sets in $X$.
Conversely, let $\{(^YF_\alpha,E)\}_{\alpha\in J}$ be a soft open cover of $\widetilde{Y}$. It is easy to see that $\{(F_\alpha,E)\}_{\alpha\in J}$ is a cover of $\widetilde{Y}$ by soft open sets in $X$. Then we can write
\[
\widetilde{Y}\widetilde{\subseteq}(F_{\alpha_1},E)\cup,\ldots,\cup(F_{\alpha_n},E),
\]
for some $\alpha_1,\ldots,\alpha_n\in J$. Therefore $\{(^YF_{\alpha_i},E)\}_{i=1}^n$ is a subcover of $\widetilde{Y}$. Hence $(Y, \tau_Y, E)$ is soft compact.
\end{proof}
\begin{theorem}
Every soft compact subspace of a soft Hausdorff space is soft closed.
\end{theorem}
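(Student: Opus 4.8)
The plan is to imitate the classical argument that a compact subset of a Hausdorff space is closed, recast entirely in the soft-set language. Writing $U=X$, I note that $\widetilde{Y}$ is soft closed exactly when its relative complement $\widetilde{Y}'=\widetilde{X\setminus Y}$ is soft open; since a soft topology is closed under arbitrary unions (axiom (ii)), it suffices to produce, for each ordinary point $x\in X\setminus Y$, a soft open set $(F^x,E)$ with $x\in(F^x,E)$ and $(F^x,E)\widetilde{\subseteq}\widetilde{Y}'$, and then to verify that $\widetilde{Y}'$ is exactly the union of these.

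First I would fix $x\in X\setminus Y$. For every $y\in Y$ we have $x\neq y$, so the soft Hausdorff hypothesis supplies soft open sets $(F_y,E)$ and $(G_y,E)$ with $x\in(F_y,E)$, $y\in(G_y,E)$, and $(F_y,E)\cap(G_y,E)=\Phi_E$. Next I observe that $\{(G_y,E)\}_{y\in Y}$ is a cover of $\widetilde{Y}$ by soft open sets of $X$: since $y\in(G_y,E)$ means $y\in G_y(e)$ for every $e\in E$, we get $Y\subseteq\bigcup_{y\in Y}G_y(e)$ for each $e$. Applying the subspace criterion (Theorem \ref{babi}) to the soft compact space $(Y,\tau_Y,E)$ then extracts a finite subcover $\widetilde{Y}\widetilde{\subseteq}(G_{y_1},E)\cup\cdots\cup(G_{y_n},E)$.

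Now I set $(F^x,E)=(F_{y_1},E)\cap\cdots\cap(F_{y_n},E)$, which is soft open by finitely many applications of axiom (iii) and contains $x$. Rewriting each disjointness $(F_{y_i},E)\cap(G_{y_i},E)=\Phi_E$ as $(F_{y_i},E)\widetilde{\subseteq}(G_{y_i},E)'$ via Proposition \ref{3.2}(v), then combining De Morgan with Proposition \ref{3.2}(vi), I obtain $(F^x,E)\widetilde{\subseteq}\bigcap_i(G_{y_i},E)'=\big(\bigcup_i(G_{y_i},E)\big)'\widetilde{\subseteq}\widetilde{Y}'$. Letting $x$ range over all of $X\setminus Y$ and comparing soft sets parameter by parameter should give $\widetilde{Y}'=\bigcup_{x\in X\setminus Y}(F^x,E)$, whence $\widetilde{Y}'$ is soft open and $\widetilde{Y}$ soft closed; the degenerate case $Y=X$ is handled directly by Proposition \ref{prop21}.

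I expect the delicate parts to be the two bookkeeping steps peculiar to the soft setting rather than the overall strategy. The first is confirming that $\{(G_y,E)\}$ genuinely covers $\widetilde{Y}$ in the soft sense, so that Theorem \ref{babi} legitimately applies. The second, and the one I regard as the main obstacle, is the closing equality $\widetilde{Y}'=\bigcup_{x}(F^x,E)$, which must be checked as a double inclusion on each $F^x(e)$: the inclusion $\bigcup_x F^x(e)\subseteq X\setminus Y$ relies on $(F^x,E)\widetilde{\subseteq}\widetilde{Y}'$, while the reverse uses $x\in F^x(e)$ for all $e$. The intervening De Morgan manipulation is routine given Propositions \ref{3.2} and \ref{3.3}.
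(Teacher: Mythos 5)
Your proposal is correct and follows essentially the same route as the paper's own proof: fix $x\in X\setminus Y$, separate $x$ from each $y\in Y$ by the soft Hausdorff property, extract a finite subcover of $\widetilde{Y}$ via Theorem \ref{babi}, intersect the corresponding neighborhoods of $x$, and write $\widetilde{Y}'$ as the union of the resulting soft open sets. Your version is in fact slightly more careful than the paper's, since you verify the covering claim and the final double inclusion explicitly and note the degenerate case $Y=X$.
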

\begin{proof}
Let $(Y, \tau_Y, E)$ be a soft compact subspace of soft Hausdorff space $(X, \tau, E)$. Let $x\in(X, E)-(Y, E)$. Then for all $y\in(Y, E)$, $x\neq y$. Therefore, there exist soft open sets $(U_y, E)$ and $(U_{xy}, E)$ containing $x$ and $y$, respectively such that $(U_y, E)\cap(U_{xy}, E)=\Phi_E$. Obviously, $\{(U_{xy}, E)\}_{y\in Y}$ is a cover of $\widetilde{Y}$ by soft open sets in $X$. By Theorem \ref{babi}, we have $(Y, E)=(U_{xy_1}, E)\cup\ldots\cup(U_{xy_n}, E)$ for some $y_1,\ldots,y_n\in Y$. Now, $x\in(U_{y_1}, E)\cap\ldots\cap(U_{y_n}, E)=(U_x, E)$ and Proposition \ref{3.3} implies that $(U_x, E)\cap(Y, E)=\Phi_E$. Hence $x\in(U_x, E)\subseteq(X, E)-(Y, E)$. Then $(X, E)-(Y, E)=\bigcup_{x\in X-Y}(U_x, E)$. Therefore $(X, E)-(Y, E)$ is soft open. Hence $(Y, E)$ is soft closed.
\end{proof}

\bigskip

Using  Propositions \ref{3.2} and  \ref{3.3}, we are going to prove that every soft closed subspace of a soft compact space is soft compact.

\begin{theorem}
Every soft closed subset of a soft compact space is soft compact.
\end{theorem}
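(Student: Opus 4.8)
The plan is to use Theorem \ref{babi}, which reduces soft compactness of the subspace $(Y,\tau_Y,E)$ to the single statement that every cover of $\widetilde{Y}$ by soft open sets in $X$ admits a finite subcover. So I would let $(Y,\tau_Y,E)$ be a soft closed subset of the soft compact space $(X,\tau,E)$, take an arbitrary cover $\{(F_\alpha,E)\}_{\alpha\in J}$ of $\widetilde{Y}$ by soft sets that are soft open in $X$, and mirror the classical argument: enlarge this family into a soft open cover of the whole space $\widetilde{X}$ by adjoining the relative complement of $\widetilde{Y}$, apply soft compactness of $X$ to extract a finite subcover, and then discard the complement to be left with a finite subcover of $\widetilde{Y}$.

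First I would observe that, since $(Y,E)$ is soft closed, its relative complement $\widetilde{Y}'$ is soft open in $X$ by definition of soft closed set. Working parameter-by-parameter one checks directly that $\widetilde{Y}\cup\widetilde{Y}'=\widetilde{X}$, and since $\{(F_\alpha,E)\}_{\alpha\in J}$ already covers $\widetilde{Y}$, Proposition \ref{3.2}(iii) combines the two soft inclusions to show that the enlarged family $\{(F_\alpha,E)\}_{\alpha\in J}\cup\{\widetilde{Y}'\}$ is a soft open cover of $\widetilde{X}$. Soft compactness of $(X,\tau,E)$ then yields finitely many indices $\alpha_1,\dots,\alpha_n$ with
\[
\widetilde{X}\ \widetilde{\subseteq}\ (F_{\alpha_1},E)\cup\cdots\cup(F_{\alpha_n},E)\cup\widetilde{Y}',
\]
where we may harmlessly assume $\widetilde{Y}'$ occurs in the finite subcollection.

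The final step is to remove $\widetilde{Y}'$ without losing coverage of $\widetilde{Y}$. Here I would intersect the displayed inclusion with $\widetilde{Y}$ and distribute that intersection over the finite union using Proposition \ref{3.3}(iii). The term $\widetilde{Y}\cap\widetilde{Y}'$ vanishes by Proposition \ref{3.3}(i) (equivalently Proposition \ref{3.2}(iv)) and is absorbed by Proposition \ref{3.3}(ii), so that after one more application of Proposition \ref{3.2}(i) to drop the intersections with $\widetilde{Y}$ one is left with $\widetilde{Y}\ \widetilde{\subseteq}\ (F_{\alpha_1},E)\cup\cdots\cup(F_{\alpha_n},E)$. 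This exhibits $\{(F_{\alpha_i},E)\}_{i=1}^n$ as a finite subcover of $\widetilde{Y}$ by soft open sets in $X$, and Theorem \ref{babi} then gives that $(Y,\tau_Y,E)$ is soft compact.

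I expect the only genuine obstacle to be the bookkeeping in this last step: one must take care that the distributive law and the identities $\widetilde{Y}\cap\widetilde{Y}'=\Phi_E$ and $(G,E)\cup\Phi_E=(G,E)$ are applied at the level of soft sets over the parameter set $E$ rather than merely for ordinary sets, and that the relative complement is formed with respect to the correct universe $X$. None of this is conceptually hard, but it is precisely where the soft framework differs superficially from the classical one, and the two propositions collected earlier are exactly what render the translation routine.
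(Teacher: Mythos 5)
Your proposal is correct and follows essentially the same route as the paper's own proof: adjoin the soft open set $(Y,E)'$ to the given cover, extract a finite subcover of $\widetilde{X}$ by soft compactness, and discard the complement to recover a finite subcover of $\widetilde{Y}$ via Theorem \ref{babi}. Your version is merely a bit more explicit about the final distributivity step that the paper delegates to ``the previous proposition.''
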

\begin{proof}
Let $(Y,\tau_Y, E)$ be a soft subspace of a soft compact space $(X, \tau, E)$ such that $(Y,E)$ is a soft closed in $X$. Let $\{(F_\alpha,E)\}_{\alpha\in J}$ be a cover of $\widetilde{Y}$ by soft open sets in $X$. $(Y,E)'$ is a soft open set in $X$. Propositions \ref{3.2} and  \ref{3.3} show that $\{(F_\alpha, E)\}_{\alpha\in J}\cup\{(Y', E)\}$ form a soft open cover of $\widetilde{X}$. Therefore
\[
(X, E)\widetilde{\subseteq}(F_{\alpha_1},E)\cup\ldots\cup(F_{\alpha_n,E})\cup(Y',E),
\]
for some $\alpha_1,\ldots,\alpha_n\in J$. Applying the previous proposition we can see that $\{(^YF_{\alpha_i},E)\}_{i=1}^n$ is a subcover of $\widetilde{Y}$. This completes the proof.
\end{proof}

\bigskip

Let $(X, \tau, E)$ be a soft topological spaces and $\mathcal{B}\subseteq\tau$. If every element of $\tau$ can be written as a union of elements of $\mathcal{B}$, then $\mathcal{B}$ is called a soft basis for the soft topology $\tau$. Each element of $\mathcal{B}$ is called a soft basis element.

We can characterize soft compact spaces in term of basis elements as follows:
\begin{theorem}
A soft topological space $(X, \tau, E)$ is soft compact if and only if there is a soft basis $\mathcal{B}$ for $\tau$ such that every cover of $\widetilde{X}$ by elements of $\mathcal{B}$ has a finite subcover.
\end{theorem}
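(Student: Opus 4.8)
The plan is to prove both implications, of which the forward one is essentially immediate and the converse carries the real content.

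For the direction ($\Rightarrow$), I would first observe that $\tau$ itself is a soft basis for $\tau$, so a soft basis certainly exists; more usefully, I claim that \emph{any} soft basis $\mathcal{B}$ works. Indeed, if $(X,\tau,E)$ is soft compact and $\{(B_\alpha,E)\}_{\alpha\in J}\subseteq\mathcal{B}$ is a cover of $\widetilde{X}$, then since $\mathcal{B}\subseteq\tau$ each $(B_\alpha,E)$ is soft open, so this is a soft open cover of $\widetilde{X}$; soft compactness then yields a finite subcover directly. Thus every cover of $\widetilde{X}$ by basis elements has a finite subcover, proving this direction with no further work.

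For the direction ($\Leftarrow$), suppose a soft basis $\mathcal{B}$ with the stated property is given, and let $\{(F_\alpha,E)\}_{\alpha\in J}$ be an arbitrary soft open cover of $\widetilde{X}$. The key step is to refine this into a cover by basis elements. Since each $(F_\alpha,E)\in\tau$, by the definition of a soft basis I may write $(F_\alpha,E)=\bigcup_{\beta\in K_\alpha}(B_{\alpha\beta},E)$ with each $(B_{\alpha\beta},E)\in\mathcal{B}$. Collecting these over all $\alpha$, the family $\{(B_{\alpha\beta},E): \alpha\in J,\ \beta\in K_\alpha\}$ consists of elements of $\mathcal{B}$, and its union equals $\bigcup_{\alpha\in J}(F_\alpha,E)$, which soft-contains $\widetilde{X}$; hence it is a cover of $\widetilde{X}$ by basis elements. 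By hypothesis it admits a finite subcover $(B_{\alpha_1\beta_1},E),\ldots,(B_{\alpha_m\beta_m},E)$.

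Finally I would pull this finite subcover back to the original cover. Each chosen basis element satisfies $(B_{\alpha_i\beta_i},E)\widetilde{\subseteq}(F_{\alpha_i},E)$ by construction, so applying the monotonicity of soft unions from Proposition \ref{3.2}(iii) repeatedly gives
\[
\widetilde{X}\widetilde{\subseteq}\bigcup_{i=1}^m (B_{\alpha_i\beta_i},E)\widetilde{\subseteq}\bigcup_{i=1}^m (F_{\alpha_i},E).
\]
Thus $\{(F_{\alpha_i},E)\}_{i=1}^m$ is a finite subcover of the original cover, and $(X,\tau,E)$ is soft compact. I expect the main obstacle to be the bookkeeping in the refinement step: one must check that passing from the $(F_\alpha,E)$ to their basis decompositions yields a genuine cover of $\widetilde{X}$ (using associativity of the soft union together with the fact that a soft set contained in a union of unions is contained in the total union), and that the containments $(B_{\alpha_i\beta_i},E)\widetilde{\subseteq}(F_{\alpha_i},E)$ are used to reselect the indices $\alpha_i$ — a given basis element may lie in several $(F_\alpha,E)$, so one simply fixes one such index for each. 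These verifications rely only on the elementary soft-set identities of Propositions \ref{3.2} and \ref{3.3}, so no genuinely new difficulty arises beyond transcribing the classical argument into the soft setting.
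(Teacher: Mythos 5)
Your proposal is correct and follows essentially the same route as the paper: the forward direction is handled by noting that a basis (the paper uses $\tau$ itself) consists of soft open sets so compactness applies directly, and the converse refines an arbitrary soft open cover into a cover by basis elements, extracts a finite subcover by hypothesis, and then selects for each chosen basis element an original cover member containing it. Your version is slightly more careful about the bookkeeping (explicitly invoking Proposition \ref{3.2}(iii) and noting that the index $\alpha_i$ must be fixed when a basis element lies in several $(F_\alpha,E)$), but the argument is the same.
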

\begin{proof}
Let $(X, \tau, E)$ be soft compact. Obviousely, $\tau$ is a soft basis for $\tau$. Therefore, every cover of $\widetilde{X}$ by elements of $\tau$ has finite subcover. Conversely, let $\{(U_\alpha, E)\}_{\alpha\in J}$ be a soft open cover of $\widetilde{X}$. We can write $(U_\alpha, E)$ as a union of basis elements, for each $\alpha\in J$. These elements form a soft open cover of $\widetilde{X}$ such as $\{(F_{\mathcal{B}}, E)\}_{\mathcal{B}\in I}$. Therefore $\widetilde{X}=(F_{\mathcal{B}_1}, E)\cup\ldots\cup(F_{\mathcal{B}_n}, E)$, for some $\mathcal{B}_1,\ldots,\mathcal{B}_n\in I$. Let $(F_{\mathcal{B}_i}, E)\widetilde{\subseteq}(U_{\alpha_i}, E)$, for each $1\leq i\leq n$. This implies that $\{(U_{\alpha_i}, E)\}_{i=1}^n$ is a finite subcover of $\widetilde{X}$. Hence, $(X, \tau, E)$ is soft compact.
\end{proof}

\begin{rem}\label{rem36}
Clearly, a soft set is not a set. Indeed, the differences between
soft topological spaces and topological spaces arise from this
fact. In a sense, when $|E|=1$, a soft set $(F,E)$ behaves
similar to a set. In fact, in this case the soft set $(F,E)$ is
the same as the set $F(e)$, where $E=\{e\}$. Therefore when
$|E|=1$, the soft topological spaces are the same as topological
spaces. Nevertheless, in this paper we will see some differences
between these two concepts when $|E|\geq 2$.
\end{rem}

\bigskip

Now, we consider a countably soft compact space constructed
around a soft topology. A soft topological space $(X, \tau, E)$ is said to be
\emph{countably soft compact} if every countable soft open cover of
$\widetilde{X}$ contains a finite subcover of $\widetilde{X}$.
Obviously, every soft compact space is countably soft compact but
the following example shows that the converse is not true in
general.
\begin{example}
We consider the (topological) space $S_{\Omega}$, the minimal
uncountable well-ordered set with order topology (see \cite{Ma}).
Let $X=S_\Omega$, $E=\{e\}$ and $\tau=\{(F,E) | F(e)$ is open in
$S_\Omega\}$. Considering Remark \ref{rem36}, the soft topological space $(X, \tau,
E)$ is countably soft compact but not soft compact.
\end{example}

\bigskip

There is a criterion for a soft space to be countable soft compact in term
of soft closed sets rather than soft open sets. First we have a
definition.

A collection $\mathcal{A}$ of soft set is said to have the
finite intersection property if for every finite sub-collection
$\{(A_1,E)\cap\ldots\cap(A_n,E)\}$ of $\mathcal{A}$, the intersection
$(A_1,E)\cap\ldots\cap(A_n,E)$ is non-null.

\begin{theorem}
A soft topological space is countably soft compact if and only if
every countable family of soft closed sets with the finite
intersection property has a nonnull intersection.
\end{theorem}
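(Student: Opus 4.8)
The plan is to prove both directions by the standard De Morgan duality between soft open covers and soft closed families, exactly as in the classical characterization of countable compactness, taking care that all set-operations on soft sets act parameterwise (in $e\in E$) so that De Morgan's laws pass through unchanged. The two auxiliary facts I will lean on are: first, that for any countable family $\{(F_n,E)\}$ one has $\left(\bigcup_n (F_n,E)\right)'=\bigcap_n (F_n,E)'$ and $\left(\bigcap_n (F_n,E)\right)'=\bigcup_n (F_n,E)'$, both verified coordinatewise; and second, the covering criterion that $\{(F_n,E)\}$ covers $\widetilde{X}$ if and only if $\bigcap_n (F_n,E)'=\Phi_E$, which follows by applying Proposition~\ref{3.2}(vi) together with Proposition~\ref{3.3}(iv),(v) to the identity $\bigcup_n(F_n,E)=\widetilde{X}$.

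For the forward direction I would assume $(X,\tau,E)$ is countably soft compact and let $\{(C_n,E)\}_{n\in\NN}$ be a countable family of soft closed sets with the finite intersection property. Arguing by contradiction, suppose $\bigcap_n (C_n,E)=\Phi_E$. Each complement $(C_n,E)'$ is then soft open, and by De Morgan $\bigcup_n (C_n,E)'=\widetilde{X}$, so $\{(C_n,E)'\}$ is a countable soft open cover of $\widetilde{X}$. Countable soft compactness yields a finite subcover $\widetilde{X}=(C_{n_1},E)'\cup\ldots\cup(C_{n_k},E)'$; complementing once more gives $(C_{n_1},E)\cap\ldots\cap(C_{n_k},E)=\Phi_E$, contradicting the finite intersection property. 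Hence the intersection is non-null.

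For the converse I would assume every countable family of soft closed sets with the finite intersection property has non-null intersection, and let $\{(U_n,E)\}_{n\in\NN}$ be a countable soft open cover of $\widetilde{X}$. Again by contradiction, suppose no finite subfamily covers $\widetilde{X}$. Then for each finite index set the corresponding union is a proper soft subset of $\widetilde{X}$, so by the covering criterion each finite intersection $(U_{n_1},E)'\cap\ldots\cap(U_{n_k},E)'$ is non-null; that is, the soft closed family $\{(U_n,E)'\}$ has the finite intersection property. The hypothesis then forces $\bigcap_n (U_n,E)'$ to be non-null, whence by De Morgan $\bigcup_n (U_n,E)\neq\widetilde{X}$, contradicting that $\{(U_n,E)\}$ is a cover.

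The step I expect to demand the most care is not any single deduction but the bookkeeping around \emph{non-null} versus \emph{null} soft sets: I must make sure that ``the union fails to exhaust $\widetilde{X}$'' is correctly matched with ``the intersection of complements is non-null,'' recalling that a soft set $(H,E)$ is null precisely when $H(e)=\emptyset$ for \emph{every} $e\in E$, so non-nullity only requires a single parameter value with nonempty image. Beyond verifying the two De Morgan identities coordinatewise—routine, since union, intersection and relative complement over the fixed parameter set $E$ are all defined entrywise in $e$—the argument is a direct dualization and presents no genuine obstacle.
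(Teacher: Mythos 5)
Your proof is correct and follows essentially the same route as the paper's: both directions are handled by contradiction via the parameterwise De Morgan duality between countable soft open covers and countable soft closed families with the finite intersection property. The only difference is cosmetic care about null versus non-null soft sets, which you handle correctly.
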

\begin{proof}
Let the soft space $(X, \tau, E)$ be countably soft compact. Let
the family $\{(F_n,E)\}_{n=1}^\infty$ of soft closed sets have
the finite intersection property. If
$\cap_{n=1}^\infty(F_n,E)=\phi_E$ by Proposition \ref{3.3},
$\{(F_n,E)'\}_{n=1}^\infty$ is a countable soft open cover of
$\widetilde{X}$. Therefore
$\widetilde{X}=(F_{n_1},E)\cup\ldots\cup(F_{n_k},E)$, for some
$n_1,\ldots,n_k\in N$. Now, De Morgan's lows and Proposition \ref{3.3}
imply that $(F_{n_1},E)\cap\ldots\cap(F_{n_k},E)=\phi_E$. This is
a contradiction. Conversely, Let $\{(F_n,E)\}_{n=1}^\infty$ be a
countable soft open cover of $\widetilde{X}$ without any
subcover. Then $\{(F_n,E)'\}_{n=1}^\infty$ is a family of soft
closed sets over $X$ such that
$\cap_{n=1}^\infty(F_n,E)'=\phi_E$. Let $n_1,\ldots,n_k$ be
arbitrary positive integers. If
$(F_{n_1},E)'\cap\ldots\cap(F_{n_k},E)'=\phi_E$ then $\widetilde{X}=(F_{n_1},E)\cup\ldots\cup(F_{n_k},E)$, that is impossible. Therefore $(F_{n_1},E)'\cap\ldots\cap(F_{n_k},E)'\neq\phi_E$, for each
$n_1,\ldots,n_k\in N$. This shows that
$\{(F_n,E)'\}_{n=1}^\infty$ have the finite intersection
property. Therefore $\cap_{n=1}^\infty(F_n,E)'\neq\phi_E$. This
is a contradiction.
\end{proof}
An immediate result of previous theorem is the following.
\begin{cor}
A soft space $(X, \tau, E)$ is countably soft compact if and only
if every nested sequence
$(F_1,E)\widetilde{\supseteq}(F_2,E)\widetilde{\supseteq}\ldots$
of nonnull soft closed sets over $X$ has a nonnull intersection.
\end{cor}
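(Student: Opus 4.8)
The plan is to derive this corollary directly from the preceding theorem, which characterizes countable soft compactness via the finite intersection property of countable families of soft closed sets. The key observation is that a nested sequence is merely a special case of a countable family, so one direction will be essentially automatic and the real content lies in verifying that the finite intersection property is available precisely when the nested sequence consists of nonnull sets.

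First I would prove the forward direction. Assume $(X,\tau,E)$ is countably soft compact and let $(F_1,E)\widetilde{\supseteq}(F_2,E)\widetilde{\supseteq}\ldots$ be a nested sequence of nonnull soft closed sets. I would check that this family has the finite intersection property: given any finite subcollection $(F_{n_1},E),\ldots,(F_{n_k},E)$, set $m=\max\{n_1,\ldots,n_k\}$. By the nesting hypothesis and Proposition \ref{3.2}(i), the intersection $(F_{n_1},E)\cap\ldots\cap(F_{n_k},E)$ equals $(F_m,E)$, which is nonnull by assumption. Hence the finite intersection property holds, and the previous theorem immediately yields $\cap_{n=1}^\infty(F_n,E)\neq\Phi_E$.

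For the converse I would argue contrapositively, or equivalently show that the nested-sequence condition forces the general finite-intersection criterion of the theorem. Suppose every nested sequence of nonnull soft closed sets has nonnull intersection, and let $\{(G_n,E)\}_{n=1}^\infty$ be an arbitrary countable family of soft closed sets with the finite intersection property. I would manufacture a nested sequence by setting $(F_n,E)=(G_1,E)\cap\ldots\cap(G_n,E)$. By Proposition \ref{prop21}(2) each $(F_n,E)$ is soft closed, the sequence is clearly decreasing, and each $(F_n,E)$ is nonnull precisely because the original family has the finite intersection property. The hypothesis then gives $\cap_{n=1}^\infty(F_n,E)\neq\Phi_E$; since this intersection coincides with $\cap_{n=1}^\infty(G_n,E)$, the finite-intersection criterion of the theorem is verified, and that theorem delivers countable soft compactness.

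The main obstacle, such as it is, will be purely bookkeeping rather than conceptual: I must be careful that the associativity and the ``intersection of a decreasing family'' identities used to collapse $(F_{n_1},E)\cap\ldots\cap(F_{n_k},E)$ to $(F_m,E)$ are genuinely justified at the level of soft sets, where intersection is defined parameterwise. This reduces to the elementary set-theoretic fact that $Y\cap Z=Y$ whenever $Y\subseteq Z$, applied coordinatewise through Proposition \ref{3.2}(i), so no new machinery is needed. The only genuine care is ensuring the passage from partial intersections back to the full family respects De Morgan duality, exactly as in the proof of the preceding theorem.
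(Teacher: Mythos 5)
Your proposal is correct and follows essentially the same route as the paper: the forward direction observes that a nested sequence of nonnull soft closed sets has the finite intersection property (your max-index collapse just makes explicit what the paper asserts), and the converse builds the nested sequence of partial intersections $(F_n,E)=(C_1,E)\cap\ldots\cap(C_n,E)$ exactly as the paper does before invoking the preceding theorem. No substantive difference.
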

\begin{proof}
Let $(X, \tau, E)$ is countably soft compact. The collection
$\{(F_n,E)\}_{n=1}^\infty$ have the finite intersection property.
Therefore $\cap_{n=1}^\infty(F_n,E)\neq\phi_E$. Conversely, let
$\{(C_n,E)\}_{n=1}^\infty$ be a collection of soft closed sets
with the finite intersection property. By Proposition \ref{prop21},  we
construct nested sequence
$(F_1,E)\widetilde{\supseteq}(F_2,E)\widetilde{\supseteq}\ldots$
of nonnull soft closed sets by setting
$(F_n,E)=(C_1,E)\cap\ldots\cap (C_n,E)$, for each positive
integer $n$. By the hypothesis $\cap_{n=1}^\infty(F_n,E)=\cap_{n=1}^\infty(C_n,E)\neq\phi_E$. Now,  Theorem 3.8 implies that $(X, \tau, E)$ is countably soft compact.
\end{proof}

\section{Soft Separation Axioms}
In this section,  we will study  some soft separation
axioms that have studied in \cite{WKM, SN}. First, we recall the
definitions.

A soft topological space $(X, \tau, E)$ over $X$ is called a soft
$T_0$-space if for each pair of distinct points, at least one has
neighborhood not containing the other, and a soft $T_1$-space if for each pair of distinct points, each one has a neighborhood not containing the other. Also, the soft space $(X, \tau, E)$ is said to be soft $T_2$- space (or soft Hausdorff) if for each pair $x,y$ of distinct points of $X$,
there exist disjoint soft open sets containing $x$ and $y$, respectively.

Obviously, every soft $T_i$-space $(i=1,2)$ is a soft
$T_{i-1}$-space. But by Remark 3.6 and general topology the
converse is not true. In \cite{SN},  the authors have shown that if
$(x, E)$ is a soft closed set in soft set $(X, \tau, E)$, for all
$x\in X$, then $(X, \tau, E)$ is soft $T_1$, but the converse does
not hold in general.

The soft space $(X, \tau, E)$ over $X$ is called soft regular if
for each soft closed set $(G,E)$ and $x\in X$ such that $x \notin (G,E)$
there exist soft open sets $(F_1,E)$ and $(F_2,E)$ such that
$x\in (F_1,E),(G,E)\widetilde{\subseteq}(F_2,E)$ and
 $(F_1,E)\cap (F_2,E)=\phi_E$. The soft space $(X, \tau, E)$ is
 said to be soft $T_3$-space if it is soft regular and soft
 $T_1$-space.

Before proceeding,  we introduce the concept of soft closure of a
soft set (see \cite{HA}). Let $(X, \tau, E)$ be a soft topological
space and $(F,E)$ be a soft set over $X$. Then the soft closure of
$(F,E)$, denoted by $(\overline{F,E})$,  is the intersection of all
soft closed super sets of $(F,E)$. First, we prove the following.
\begin{lemma}\label{lem41}
Let $(X, \tau, E)$ be a soft topological space and
$(F,E)$ be a soft set over $X$. If $x\in (\overline{F,E})$, then
every soft open set $(G,E)$ containing $x$ intersects $(F,E)$.
\end{lemma}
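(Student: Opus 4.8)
The plan is to argue by contraposition. Rather than starting from $x\in(\overline{F,E})$ together with a soft open set $(G,E)$ containing $x$, I would assume that some soft open $(G,E)$ with $x\in(G,E)$ \emph{fails} to meet $(F,E)$, that is $(G,E)\cap(F,E)=\Phi_E$, and then deduce $x\notin(\overline{F,E})$. This is logically equivalent to the stated implication and reduces everything to the definition of the soft closure.

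The first step is to convert disjointness into a containment. Since intersection is symmetric, $(G,E)\cap(F,E)=\Phi_E$ reads as $(F,E)\cap(G,E)=\Phi_E$, and Proposition~\ref{3.2}(v) turns this into $(F,E)\widetilde{\subseteq}(G,E)'$. Because $(G,E)\in\tau$ is soft open, its relative complement $(G,E)'$ is by definition soft closed. Hence $(G,E)'$ is a soft closed super set of $(F,E)$.

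The second step invokes the definition of $(\overline{F,E})$ as the intersection of \emph{all} soft closed super sets of $(F,E)$. Since $(G,E)'$ is one such super set, we obtain $(\overline{F,E})\widetilde{\subseteq}(G,E)'$, which parameter-by-parameter says $\overline{F}(e)\subseteq G'(e)=X-G(e)$ for every $e\in E$. Finally I would read off the point membership: the hypothesis $x\in(G,E)$ means $x\in G(e)$ for every $e\in E$, so $x\notin X-G(e)=G'(e)$ for every $e$; combined with $\overline{F}(e)\subseteq G'(e)$ this forces $x\notin\overline{F}(e)$ for every $e$, and in particular for at least one $e$, so $x\notin(\overline{F,E})$. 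This is precisely the contrapositive of the claim.

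I expect the only delicate point to be keeping the two notions of membership straight: soft containment is checked parameter-by-parameter ($\overline{F}(e)\subseteq G'(e)$ for all $e$), whereas the assertion ``$x\in(H,E)$'' demands $x\in H(e)$ for \emph{all} $e$, so its negation needs only a single $e$ with $x\notin H(e)$. As long as the complement $(G,E)'$ is read relative to $X$ (so $G'(e)=X-G(e)$) and Proposition~\ref{3.2}(v) is applied verbatim, the argument is a faithful transcription of the classical topological proof, and there is no substantive obstacle beyond this bookkeeping.
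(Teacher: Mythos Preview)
Your proposal is correct and follows essentially the same route as the paper: both arguments convert $(F,E)\cap(G,E)=\Phi_E$ into $(F,E)\widetilde{\subseteq}(G,E)'$ via Proposition~\ref{3.2}(v), pass to $(\overline{F,E})\widetilde{\subseteq}(G,E)'$ by the definition of closure, and then conflict this with $x\in(G,E)$. The paper phrases it as a proof by contradiction (deriving $x\in(G,E)\cap(G,E)'=\Phi_E$) rather than contraposition, but the substance is identical.
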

\begin{proof}
Let $x\in (\overline{F,E})$. Let there is a soft open set $(G,E)$
containing $x$ such that $(F,E)\cap (G,E)=\phi_E$. By Proposition \ref{3.2},
we have $(F,E)\widetilde{\subseteq}(G,E)'$. Therefore
$(\overline{F,E})\widetilde{\subseteq}(G,E)'$. Hence $x\in
(G,E)\cap(G,E)'$. This is a contradiction. Therefore
$(F,E)\cap(G,E)\neq \phi_E$, for each soft open set $(G,E)$
containing $x$.
\end{proof}
The following example shows that the converse of Lemma \ref{lem41} is not
true.
\begin{example}
Suppose that the following  sets are given:  $X=\{h_1,h_2,h_3\}$, $E=\{e_1,e_2\}$ and $\tau=\{\phi_E,\widetilde{X},(F_1,E),(F_2,E),\ldots,(F_{30},E)\}$
where $F_1,F_2,\ldots,F_{30}$ are given in Example 9 of \cite{SN}.
Then $(X,\tau)$ is a soft topological space over $X$. We consider
the soft set $(F_{25},E)$, where
\[
F_{25}(e_1)=\{h_2\},\ \ F_{25}(e_2)=X.
\]
It is easy to see that the following hold
\[
(\overline{F_{25},E})=(F_{25},E),\  h_1\notin (\overline{F_2,E}).
\]
But for every soft open set $(F,E)$ containing $h_1$,  we have
$(F,E)\cap (F_{25},E)\neq\phi_E$.
\end{example}

\bigskip

\begin{proposition}\label{prop43}
Let $(X, \tau, E)$ be a soft regular space. Then, for each point
$x$ of $X$ and a soft open set  $(F,E)$ containing $x$, there is a soft open set $(G,E)$ containing $x$ such that
$(\overline{G,E})\widetilde{\subseteq}(F,E)$.
\end{proposition}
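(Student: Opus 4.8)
The plan is to mimic the classical topology proof that regularity is equivalent to the condition that every open neighborhood of a point contains the closure of a smaller open neighborhood. The hypothesis gives me a point $x$ and a soft open set $(F,E)$ containing $x$. The natural move is to pass to the complement: set $(H,E)=(F,E)'$, which is a soft closed set, and observe that since $x\in(F,E)$ we should have $x\notin(H,E)$. This is exactly the configuration of a point and a soft closed set not containing it, which is what soft regularity is designed to separate.

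Next I would apply the definition of soft regularity to the pair $x$ and $(H,E)$. This yields soft open sets $(F_1,E)$ and $(F_2,E)$ with $x\in(F_1,E)$, $(H,E)\widetilde{\subseteq}(F_2,E)$, and $(F_1,E)\cap(F_2,E)=\phi_E$. I will take $(G,E):=(F_1,E)$ as my candidate; it is soft open and contains $x$, so it remains only to verify $(\overline{G,E})\widetilde{\subseteq}(F,E)$. The disjointness $(F_1,E)\cap(F_2,E)=\phi_E$ gives, via part (v) of Proposition \ref{3.2}, the inclusion $(F_1,E)\widetilde{\subseteq}(F_2,E)'$. Since $(F_2,E)$ is soft open, $(F_2,E)'$ is soft closed, and it is a soft closed super set of $(G,E)=(F_1,E)$; hence by the definition of soft closure $(\overline{G,E})\widetilde{\subseteq}(F_2,E)'$.

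Finally I would close the loop by comparing $(F_2,E)'$ with $(F,E)$. From $(H,E)\widetilde{\subseteq}(F_2,E)$, part (vi) of Proposition \ref{3.2} gives $(F_2,E)'\widetilde{\subseteq}(H,E)'=(F,E)''=(F,E)$, using that the relative complement is an involution. Chaining this with the previous inclusion yields $(\overline{G,E})\widetilde{\subseteq}(F_2,E)'\widetilde{\subseteq}(F,E)$, which is the desired conclusion.

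The step I expect to require the most care is the very first one, namely justifying that $x\notin(H,E)$ where $(H,E)=(F,E)'$. Here the subtleties of soft sets versus ordinary sets (flagged in Remark \ref{rem36}) matter: the relation $x\in(F,E)$ means $x\in F(e)$ for every $e\in E$, so $x\notin F'(e)=U-F(e)$ for every $e$, and thus $x$ fails to belong to $(H,E)$ in the soft sense. I would want to state this cleanly so that the application of the regularity axiom is unambiguous. The remaining manipulations are routine once Propositions \ref{3.2} and \ref{3.3} are invoked, and the involutivity of the complement should be noted explicitly since it is used implicitly in the last inclusion.
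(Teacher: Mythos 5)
Your argument is correct and follows essentially the same route as the paper's proof: pass to the soft closed complement $(F,E)'$, apply soft regularity to separate it from $x$, and chain the inclusions $(\overline{G,E})\widetilde{\subseteq}(F_2,E)'\widetilde{\subseteq}((F,E)')'=(F,E)$ via Proposition \ref{3.2}. Your extra care in justifying $x\notin(F,E)'$ from $x\in(F,E)$ is a welcome addition that the paper leaves implicit.
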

\begin{proof}
$(F,E)'$ is a soft closed set not containing $x$. Therefore,
there exist soft open sets $(G,E)$ and $(H,E)$ such that $x\in
(G,E)$, $(F,E)'\widetilde{\subseteq}(H,E)$ and $(G,E)\cap
(H,E)=\phi_E$. Proposition \ref{3.2} implies that
$(G,E)\widetilde{\subseteq}(H,E)'$. Therefore
$(\overline{G,E})\widetilde{\subseteq}(H,E)'\widetilde{\subseteq}((F,E)')'=(F,E)$.
\end{proof}

The following example shows that the converse of Proposition \ref{prop43} does not hold in general.

\begin{example}\label{ex44}
Let $X=\{h\}$, $E=\{e_1,e_2\}$ and $\tau=\{\phi_E, \widetilde{X},
(F_1,E),(F_2,E) \}$, where
\[
F_1(e_1)=\{h\},\ F_1(e_2)=\emptyset\ \ \&\ \ F_2(e_1)=\emptyset,
F_2(e_2)=\{h\}
\]
\end{example}

\bigskip

It is easy to see that $(X, \tau, E)$ is not soft regular
Nevertheless, for $h\in X$ and soft open set $\widetilde{X}$
containing $h, \widetilde{X}$ itself is a soft open set
containing $h$ such that $h\in
\overline{{\widetilde{X}}}\widetilde{\subseteq} \widetilde{X}$.

Now, we exhibit a necessary and sufficient condition for a soft
space to be a soft regular space.
\begin{theorem}
A soft space $(X, \tau, E)$ is soft regular if and only if for
each $x\in X$ and soft closed set $(F,E)$ not containing $x$,
there is a soft a open set $(G,E)$ containing $x$ such that
$(\overline{G,E})\cap(F,E)=\phi_E$.
\end{theorem}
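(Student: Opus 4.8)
The plan is to prove both directions by using the characterization of soft closure provided by Lemma \ref{lem41} together with the defining property of soft regularity. The statement is essentially a reformulation of regularity where the condition $(F_1,E)\cap(F_2,E)=\phi_E$ on separating soft open sets is replaced by the cleaner condition $(\overline{G,E})\cap(F,E)=\phi_E$ on the closure of the neighbourhood of $x$. So the two conditions should be logically equivalent, and each implication is a short argument.

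For the forward direction, I would assume $(X,\tau,E)$ is soft regular and take a point $x\in X$ together with a soft closed set $(F,E)$ with $x\notin(F,E)$. Regularity directly yields soft open sets $(G,E)$ and $(H,E)$ with $x\in(G,E)$, $(F,E)\widetilde{\subseteq}(H,E)$ and $(G,E)\cap(H,E)=\phi_E$. The task is then to upgrade $(G,E)\cap(H,E)=\phi_E$ to $(\overline{G,E})\cap(F,E)=\phi_E$. From $(G,E)\cap(H,E)=\phi_E$ and Proposition \ref{3.2}(v) I get $(G,E)\widetilde{\subseteq}(H,E)'$; since $(H,E)'$ is soft closed and contains $(G,E)$, it contains the closure, so $(\overline{G,E})\widetilde{\subseteq}(H,E)'$. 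Finally $(F,E)\widetilde{\subseteq}(H,E)$ gives $(F,E)\cap(H,E)'=\phi_E$ (again by Proposition \ref{3.2}(v) applied in the contrapositive form), and intersecting with $(\overline{G,E})\widetilde{\subseteq}(H,E)'$ forces $(\overline{G,E})\cap(F,E)=\phi_E$. This direction is really just the argument already used in Proposition \ref{prop43}, repackaged.

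For the converse, I would assume the closure condition and verify the definition of soft regularity. Given a soft closed set $(F,E)$ and $x\notin(F,E)$, the hypothesis supplies a soft open $(G,E)$ containing $x$ with $(\overline{G,E})\cap(F,E)=\phi_E$. I take $(F_1,E)=(G,E)$ and propose $(F_2,E)=(\overline{G,E})'$. Then $(F_2,E)$ is soft open (complement of a soft closed set), and from $(\overline{G,E})\cap(F,E)=\phi_E$ together with Proposition \ref{3.2}(v) I obtain $(F,E)\widetilde{\subseteq}(\overline{G,E})'=(F_2,E)$. It remains to check disjointness: $(F_1,E)\cap(F_2,E)=(G,E)\cap(\overline{G,E})'$, and since $(G,E)\widetilde{\subseteq}(\overline{G,E})$ this intersection is $\phi_E$ by Proposition \ref{3.3}(i) after noting $(G,E)\cap(\overline{G,E})'\widetilde{\subseteq}(\overline{G,E})\cap(\overline{G,E})'=\phi_E$. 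This establishes all three requirements of regularity.

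The step I expect to require the most care is the converse's disjointness verification, because it hinges on the inclusion $(G,E)\widetilde{\subseteq}(\overline{G,E})$ being genuinely available — this is immediate from the definition of soft closure as an intersection of soft closed super sets, but it must be invoked explicitly. The only other subtlety is the repeated use of the equivalence in Proposition \ref{3.2}(v) between disjointness and containment in the relative complement; as long as I apply it consistently and track which soft set plays the role of the complement, the rest is routine manipulation with the operations from Propositions \ref{3.2} and \ref{3.3}. No genuinely hard obstacle arises, since the theorem is a packaging of regularity through the closure operator rather than a substantive new fact.
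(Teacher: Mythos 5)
Your proof is correct and takes essentially the same route as the paper's: the forward direction passes from $(G,E)\cap(H,E)=\phi_E$ to $(\overline{G,E})\widetilde{\subseteq}(H,E)'\widetilde{\subseteq}(F,E)'$, and the converse uses $(G,E)$ and $(\overline{G,E})'$ as the separating soft open sets. Your write-up is in fact slightly more careful than the paper's terse converse, since you explicitly justify that $(\overline{G,E})'$ is soft open and that $(G,E)\cap(\overline{G,E})'=\phi_E$ via $(G,E)\widetilde{\subseteq}(\overline{G,E})$.
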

\begin{proof}
Let $(X, \tau, E)$ be soft regular. There exist soft open sets
$(G,E)$ and $(H,E)$ such that $x\in(G,E)$,
$(F,E)\widetilde{\subseteq}(H,E)$ and $(G,E)\cap(H,E)=\phi_E$. Then
$(G,E)\widetilde{\subseteq}(H,E)'\widetilde{\subseteq}(F,E)'$.
This implies that
$(\overline{G,E})\widetilde{\subseteq}(H,E)'\widetilde{\subseteq}(F,E)'$.
Therefore $(\overline{G,E})\cap(F,E)=\phi_E$.

Conversely, Proposition \ref{3.2} implies that
$(F,E)\widetilde{\subseteq}(\overline{G,E})'$. Therefore there is
a soft open set $(\overline{G,E})'$ containing $(F,E)$ such that
$(G,E)\cap(\overline{G,E})'=\phi_E$. This completes the proof.
\end{proof}
A soft space topological space $(X, \tau, E)$ is said to be soft
normal if for each soft closed sets $(F,E)$ and $(G,E)$ over $X$
with null intersection there exist soft open sets $(F_1,E)$ and
$(F_2,E)$ containing $(F,E)$ and $(G,E)$ respectively, such that
$(F_1,E)\cap(F_2,E)=\phi_E$. Also, a soft topological space $(X,
\tau, E)$ is said to be a soft $T_4$-space if it is soft normal
and soft $T_1$-space.
\begin{theorem}
Let $(X, \tau, E)$ be a soft space. Let for each soft closed set
$(F,E)$ and soft open set $(G,E)$ containing $(F,E)$ there is a
soft open set $(H,E)$ containing $(F,E)$ such that
$(\overline{H,E})\widetilde{\subseteq}(G,E)$. Then $(X, \tau, E)$
is soft normal.
\end{theorem}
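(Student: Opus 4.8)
The plan is to translate the classical proof that the shrinking property implies normality into the soft-set language, using Proposition \ref{3.2} at each step to convert between disjointness and containment. I would begin with two soft closed sets $(F,E)$ and $(G,E)$ satisfying $(F,E)\cap(G,E)=\phi_E$, and aim to produce disjoint soft open sets containing them.

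First I would recast the disjointness as a containment so that the hypothesis becomes applicable. By Proposition \ref{3.2}(v), the relation $(F,E)\cap(G,E)=\phi_E$ is equivalent to $(F,E)\widetilde{\subseteq}(G,E)'$. Since $(G,E)$ is soft closed, its relative complement $(G,E)'$ is soft open by the definition of soft closed sets, so $(G,E)'$ is a soft open set containing the soft closed set $(F,E)$. This is exactly the configuration to which the hypothesis applies: there is a soft open set $(H,E)$ with $(F,E)\widetilde{\subseteq}(H,E)$ and $(\overline{H,E})\widetilde{\subseteq}(G,E)'$.

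Next I would exhibit the two separating neighborhoods. I set $(F_1,E)=(H,E)$, which is soft open and contains $(F,E)$ by construction, and $(F_2,E)=(\overline{H,E})'$. The latter is soft open because $(\overline{H,E})$, being an intersection of soft closed super sets, is soft closed by Proposition \ref{prop21}, whence its complement lies in $\tau$. To check that $(F_2,E)$ contains $(G,E)$, I would apply Proposition \ref{3.2}(vi) to the inclusion $(\overline{H,E})\widetilde{\subseteq}(G,E)'$: passing to complements reverses it and yields $((G,E)')'\widetilde{\subseteq}(\overline{H,E})'$, that is $(G,E)\widetilde{\subseteq}(F_2,E)$.

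Finally I would verify disjointness. Because the soft closure is the intersection of all soft closed super sets of $(H,E)$, it contains $(H,E)$, so $(H,E)\widetilde{\subseteq}(\overline{H,E})$; Proposition \ref{3.2}(v) then gives $(H,E)\cap(\overline{H,E})'=\phi_E$, i.e. $(F_1,E)\cap(F_2,E)=\phi_E$. This produces the required disjoint soft open sets and establishes soft normality. I do not anticipate a genuine obstacle: the argument is a faithful transcription of the classical one, and the only points needing care are the correct bookkeeping of relative complements and the elementary observation $(H,E)\widetilde{\subseteq}(\overline{H,E})$, both of which follow immediately from the cited propositions and the definition of soft closure.
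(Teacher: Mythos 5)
Your proof is correct and follows essentially the same route as the paper's: apply the hypothesis to the soft open set $(G,E)'$ containing $(F,E)$, take $(H,E)$ and $(\overline{H,E})'$ as the separating soft open sets, and use Proposition \ref{3.2} to convert between disjointness and containment. In fact your write-up is slightly more careful than the paper's (which omits stating explicitly that $(\overline{H,E})'$ is soft open and contains $(G,E)$), but the argument is the same.
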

\begin{proof}
For each soft closed sets $(F,E)$ and $(I,E)$ with null
intersection $(I,E)'$ is a soft open set containing $(F,E)$.
Therefore there is a soft open set $(H,E)$ containing $(F,E)$ such that
$(\overline{H,E})\widetilde{\subseteq}(I,E)'$. By Proposition
\ref{3.2}, $(I,E)\widetilde{\subseteq}(\overline{H,E})'$. Since
$(H,E)\widetilde{\subseteq}((\overline{H,E})')'$, we have
$(H,E)\cap(\overline{H,E})'=\Phi_E$. Hence $(X, \tau, E)$ is
soft normal.
\end{proof}

\bigskip

There is an obvious question to ask at this point. Is a soft
$T_4$-space a soft $T_3$-space? The soft space $(X, \tau, E)$ in
Example \ref{ex44},  shows that the answer is ''NO''. In fact it is easy
to see that $(X, \tau, E)$ is a soft $T_4$-space and not a soft
$T_3$-space.

\bigskip
\begin{rem}
In Theorem 3.17 of \cite{WKM}, the following is proved:\\\\
Theorem. (\cite{WKM}) Let $(X, \tau, E)$ be a soft topological space over $X$ and $x\in X$. Then the following are equivalent:
\begin{description}
  \item[(1)] $(X, \tau, E)$ is a soft regular space;
  \item[(2)] For each soft closed set $(G,E)$ such that
$(x,E)\cap(G,E)=\phi_E$.
\end{description}
There exit soft two open sets $(F_1,E)$ and $(F_2,E)$ such that $(x,E)\widetilde{\subseteq}(F_1,E)$,
$(G,E)\widetilde{\subseteq}(F_2,E)$ and $(F_1,E)\cap(F_2,E)=\phi_E$.

\bigskip
\noindent
By Example \ref{ex44},  we can see that this theorem is incorrect. In fact
the soft space $(X, \tau, E)$ in this example satisfies in (2),
but it is not soft regular. We note that $(x,E)\cap(G,E)=\phi_E$
is not equivalent to $x\notin (G,E)$. But
$(x,E)\widetilde{\nsubseteq}(G,E)$ is. Therefore, we must replace
the condition $(x,E)\widetilde{\nsubseteq}(G,E)$ instead of
$(x,E)\cap(G,E)=\phi_E$ in Theorem 3.17 of \cite{WKM}.
\end{rem}

\bigskip

\begin{rem}
In Theorem 3.25 of \cite{WKM}, the following is proved:\\\\
Theorem. (\cite{WKM}) Let $(X, \tau, E)$ be a soft topological
space over $X$. If $(X, \tau, E)$ is a soft normal space and if
$(x,E)$ is a soft closed set for each $x\in X$, then $(X, \tau,
E)$ is a soft $T_3$-space.

This theorem is incorrect. The soft space $(X, \tau, E)$ in
Example \ref{ex44} satisfies in the conditions of the theorem, but it is
not a soft $T_3$-space.
\end{rem}

There are some familiar results on the applications of
compactness in separation axioms in General Topology such as: \emph{Every compact Hausdorff space is normal}. But it is not true about soft topological spaces. Consider the following example.
\begin{example}
Let  $X=\{h\}$, $E=\{e_i\}_{i=1}^5$ and $\tau=\{\phi_E,
\widetilde{X}, (F_1,E),(F_2,E), (F_3,E) \}$, where
\[
F_1(e_1)=\emptyset,\ F_1(e_2)=X,\  F_1(e_3)=\emptyset,\
F_1(e_4)=X,\  F_1(e_5)=\emptyset;
\]
\[
F_2(e_1)=X,\ F_2(e_2)=X,\  F_2(e_3)=X,\ F_2(e_4)=\emptyset,\
F_2(e_5)=X;
\]
\[
F_3(e_1)=\emptyset,\ F_3(e_2)=X,\  F_3(e_3)=\emptyset,\
F_3(e_4)=\emptyset,\  F_3(e_5)=\emptyset.
\]
It is easy to see that $(X, \tau, E)$ is not soft normal.
Nevertheless, it is soft compact.
\end{example}

It is remarkable that every compact Hausdorff space is not normal, even if we consider $(X, \tau, E)$ as a
soft regular space. Indeed, the Example \ref{ex44} is a counterexample.


\bigskip

\noindent
Esmaeil Peyghan and Babak Samadi\\
Department of Mathematics, Faculty  of Science\\
Arak University\\
Arak 38156-8-8349,  Iran\\
Email: epeyghan@gmail.com
\bigskip

\noindent
Akbar Tayebi\\
Department of Mathematics, Faculty  of Science\\
University of Qom \\
Qom. Iran\\
Email:\ akbar.tayebi@gmail.com

\end{document}